\numberwithin{equation}{section}
\newtheorem{theor}{Theorem}[section]
\newtheorem{prop}[theor]{Proposition}
\newtheorem{defin}[theor]{Definition}
\newtheorem*{postulate}{Postulate}
\newtheorem{lem}[theor]{Lemma}
\begin{document}

\title[ A different approach to teaching geometry in Italian secondary schools ]{A different approach to teaching geometry in Italian secondary schools}
\author[D. Uccheddu]{Daria Uccheddu}
\address{Dipartimento di Matematica e Informatica, Universit\`{a} di Cagliari,
Via Ospedale 72, 09124 Cagliari, Italy}
\email{ daria.uccheddu@unica.it }
\thanks{
The author was supported by KASBA-progetti ricerca di base - fcs annualit\`a 2017}

\date{}

\keywords{Proof; teaching geometry; Undergraduate mathematics education.}
\begin{abstract}
Inspired by a didactic experience in an academic environment, and following the idea given by M. Villa in \cite{Villa}, we illustrate two different proofs of an important result in Euclidean geometry studied in the first two years of Italian secondary schools. Specifically, we propose the proof of Varignon's Theorem from both the classical synthetic and the analytical points of view.
\end{abstract}

\maketitle

\section {Introduction} 
Teaching geometry in Italian schools is fundamental for the mathematical education of the students and, traditionally, proving a geometric theorem is a crucial activity in mathematical practice. Frequently, students in secondary schools consider the proof of a theorem a pointless thing, although it has a very important formative value in mathematical education (see \cite{Hanna} and \cite{Durand}).\\

The idea of this paper comes from a didactic experience about an open problem proposed during the course of Methodologies and Technologies for the didactics of Mathematics at  the University of Cagliari. Six student teachers participated (four females and two males), all of the first year of the Master Degree in Mathematics.
Even if the activity was born for stimulate students to conjecture a statement of the proposed problem using a dynamic geometry software (DGS), and, then provide its proof, (see \cite{Boero}, \cite{Mariotti} and \cite{Pedemonte}), the same activity showed that for almost all the students it was easier to provide an analytical proof rather than a synthetic one. \\

It is well known that teaching Euclidean geometry is of fundamental importance in Italian secondary schools, in all grades of studies. For example, the Cartesian plane as a tool for studying geometry is one of the aims of the first cycle of the secondary schools (see \cite{D.M 254 2012} p.51),  and of course Euclidean geometry is the most suitable tool for understanding the space around us.\\

It is therefore clear that it is essential tackling geometric problems using tools and techniques that can also be used in everyday experience. In fact, mathematical outgoing skills are acquired both by students who will continue with a graduate education and by students who will enter into the job market. 
All these elements lead us to consider a different approach to mathematics and in particular to geometry. 
At the same time, it is necessary not to reduce the importance of a rigorous and logical approach to the study of Euclidean geometry as presented in Euclid's Elements. Thus, Euclidean geometry's theorems should be proven twice by using metric tools and by  applying synthetic techniques.\\

As an example, in this paper we propose two proofs of Varignon's Theorem from the synthetical and the analytical points of view. The first proof is the original one given by Varignon in \cite{Varignon}, which uses synthetic geometry following Euclid's Elements (\cite{Euclid}). The second proof is constructed fixing a Cartesian frame and using analytic tools.\\
The paper is organized as follow.
In Section \ref{HF} we recall briefly the aspects of the Italian school system which are significant in our discussion. In Section \ref{EGCG} we explain definitions, propositions and all the geometrical objects we need. Finally,  in Section \ref{DE} we propose the two proofs of Varignon's Theorem.

\section{Some aspects of teaching geometry in the Italian School System}\label{HF}
Finding the best way of teaching geometry in Italian secondary schools has been historically and still is largely studied by teachers and researchers in mathematical education (see \cite{Herb}). 
As planned in the National Italian program, the students start learning geometry in the Elementary schools (I-V grades). 
In the first cycle of the secondary schools (VI-VIII grades), geometry is approached in an intuitive and empirical way (see \cite{Villa}). Classically, the last subject dealt with is solid geometry from a metric point of view, which consists in applying formulas to compute volumes and areas of classical geometric surfaces (see \cite{D.M 254 2012} p.52). In fact, the Italian National Program for I-VIII grades (\cite{D.M 254 2012}) is a collection of goals that students must achieve at the end of VIII grade. In particular, this program lists the objectives in four areas of mathematics, i.e.  ``Numbers", ``Space and Figure", ``Relations and Functions" and ``Data and Foresight".\\

In particular in the list of goals \cite[p.52]{D.M 254 2012} relative to ``Space and Figure", also called ``Geometry and Measuring",  we read: 

\begin{itemize} 
\item reproducing figures and geometric pictures, using suitable and appropriate tools (ruler,  set square, drawing compass, $180$ degree protractor, Geometric software).
\item drawing points, segments and figures in the Cartesian Plane.
\item learning definitions and properties (angles, line of symmetry, diagonals) of fundamental plane figures (triangles, quadrilaterals, regular polygons, circle).
\item computing area and volume of most common solid figures (...).
\end{itemize}
Thus, at the end of the VIII grade students acquire a concrete and closely related to the concept of measure idea of geometry.

On the other hand, in IX and X grades, Euclidian geometry is developed through an exclusively synthetic approach, following Euclid's Elements (see \cite{Euclid}), while in XI and XII grades students approach Euclidian geometry from an analytic point of view. \\

Analyzing the National Italian programs it thus seems that there is a gap between the first and the second cycle of secondary schools.\\
Taking into account that an axiomatic approach (in the strictly classical sense, see \cite{Villa} for more details) to Euclidian's geometry is of crucial importance for the intellectual and educational value
of mathematics (see \cite{Vinner}), we believe that one way to fill the gap could be applying both an analytic and a synthetic strategy in parallel, 
in order to give a logic justification to introduce the Cartesian plane and to use coordinates, already in the first cycle of the secondary school. \\

\section{Notions of Euclidean Geometry}\label{EGCG}
In this section we briefly list definitions, theorems and postulates that we will use in the next and  last section. \\
Let us first recall the fifth Euclid's postulate  and some basic definitions (we follow here the English translation of Euclid's Elements given in \cite{Euclid})\begin{postulate}[Parallel postulate or fifth Euclid's postulate]\label{5postulato}
For any line $r$ and point $P$ not on $r$, there exists a unique line through $P$ not meeting $r$.\end{postulate}

\begin{defin} [Parallel lines]
Parallel lines are straight-lines which, being in the same plane, and being produced to infinity in each direction, meet with one another in neither (of these directions).

\end{defin}
\begin{defin} Given two points $A$ and $B$ in a straight-line $r$ we call segment $AB$ the subset of points of $r$ between $A$ and $B$.
\end{defin}
We also need the following results:
 \begin{prop}
Straight-lines parallel to the same straight-line are also parallel to one another.

\end{prop}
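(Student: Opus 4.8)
The plan is to deduce this directly from the Playfair formulation of the parallel postulate (Postulate~\ref{5postulato}) by a short argument by contradiction, rather than following Euclid's original transversal-and-alternate-angles route, since the excerpt makes available only the uniqueness form of the postulate and none of the angle congruence results that Euclid's Proposition~30 relies on. Let $r$ be the common line, and let $s$ and $t$ be two distinct straight-lines with $s$ parallel to $r$ and $t$ parallel to $r$; the goal is to show that $s$ is parallel to $t$.

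First I would suppose, for contradiction, that $s$ and $t$ are not parallel. Being coplanar straight-lines, they must then meet in some point $P$. Second, I would observe that $P$ does not lie on $r$: indeed $P$ lies on $s$, and since $s$ is parallel to $r$ the line $s$ meets $r$ in no point, so $P$ cannot lie on $r$. This is the step that makes $P$ eligible as the ``point not on $r$'' required by the hypothesis of the postulate.

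Third, I would invoke Postulate~\ref{5postulato} at the point $P$: there exists a \emph{unique} line through $P$ that does not meet $r$. But both $s$ and $t$ pass through $P$, and, being parallel to $r$, neither of them meets $r$. By the uniqueness clause of the postulate we conclude $s = t$, contradicting the assumption that $s$ and $t$ are distinct. Hence $s$ and $t$ cannot meet, and, since they lie in the same plane, they are parallel by definition.

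The main obstacle here is conceptual rather than computational. One must fix the convention on parallelism carefully, so that the distinctness hypothesis on $s$ and $t$ is used at exactly the right place; and one must justify that two non-parallel coplanar straight-lines genuinely intersect, which is precisely where the planarity built into the \emph{definition} of parallel lines is indispensable. Once these two points are handled, the argument reduces to a single clean application of the uniqueness in the parallel postulate.
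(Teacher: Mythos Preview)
Your argument is correct. The paper, however, does not actually supply a proof: it simply refers the reader to Euclid's \emph{Elements}, Book~1, Proposition~30. Euclid's own demonstration there is the transversal-and-angles route you deliberately set aside: he draws a line cutting all three parallels, invokes Proposition~29 to obtain equal alternate (or corresponding) angles with the common line, and then applies Proposition~27 to conclude that the two outer lines are parallel to one another.

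So your approach is genuinely different. By working directly from the Playfair formulation that the paper adopts as its parallel postulate, you avoid importing Propositions~27--29, none of which the paper states; the cost is only the short contradiction argument and the care about distinctness and coplanarity that you flag. Euclid's route, by contrast, is the natural one inside his own development, where the angle criteria for parallelism are already in hand, but it would be circular or at least unmotivated in the paper's minimal setup. Given what the excerpt actually makes available, your proof is the more self-contained choice.
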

\begin{proof}
See \cite{Euclid}, Prop. $30$ of Book $1$.
 \end{proof}
\begin{prop}[Basic Proportionality Theorem and converse]\label{CBOT}
If some straight-line is drawn parallel to one of the sides of a triangle then it will cut the (other) sides of the triangle proportionally. \\
And if (two of) the sides of a triangle are cut proportionally then the straight-line joining the cutting (points) will be parallel to the remaining side of the triangle.

\end{prop}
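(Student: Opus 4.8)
The plan is to prove both directions by the classical area method, exactly in the spirit of Euclid's Elements (Book $6$, Prop. $2$), so that the argument rests only on notions of the kind already introduced in this section. Fix a triangle with vertices $A$, $B$, $C$, and suppose a straight-line meets the side $AB$ at a point $D$ and the side $AC$ at a point $E$; I write $[XYZ]$ for the area of the triangle with vertices $X$, $Y$, $Z$. The whole proof hinges on a single comparison principle: two triangles of the same height are to one another as their bases, and in particular triangles on equal bases and of the same height are equal in area. I would first isolate this principle (it is Prop. $1$ of Book $6$, itself resting on the theory of proportion and on the fact that triangles on the same base and between the same parallels are equal), since both implications reduce to it.

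For the direct statement, assume the cutting line $DE$ is parallel to $BC$, and draw the two auxiliary segments $BE$ and $CD$. Since the triangles $BDE$ and $CDE$ share the common base $DE$ while their apices $B$ and $C$ lie on the line $BC$ parallel to $DE$, they have the same height, so $[BDE]=[CDE]$. Next I would compare $[ADE]$ with each of these. The triangles $ADE$ and $BDE$ have the apex $E$ in common and bases $AD$ and $DB$ on the single line $AB$, whence the comparison principle gives $[ADE]:[BDE]=AD:DB$; in the same way, reading $AE$ and $EC$ as bases on $AC$ with common apex $D$, one gets $[ADE]:[CDE]=AE:EC$. Combining these two proportions through the equality $[BDE]=[CDE]$ yields $AD:DB=AE:EC$, which is the asserted proportionality.

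For the converse, assume instead that the sides are cut proportionally, $AD:DB=AE:EC$, and draw the same segments $BE$ and $CD$. Reading the two proportions of the previous paragraph backwards gives $[ADE]:[BDE]=[ADE]:[CDE]$, and hence $[BDE]=[CDE]$. Now $BDE$ and $CDE$ stand on the common base $DE$ and are equal in area, so by Book $1$, Prop. $39$ (equal triangles on the same base and on the same side lie between the same parallels) their apices $B$ and $C$ lie on a line parallel to $DE$; that is, $DE$ is parallel to $BC$. The step I expect to demand the most care is precisely this last one: I must invoke the converse recorded in Prop. $39$ rather than Prop. $37$, and I must first check that $B$ and $C$ genuinely lie on the \emph{same} side of the line $DE$ (they do, since $D$ and $E$ separate $A$ from $B$ and from $C$ respectively, placing $B$ and $C$ on the side opposite to $A$), so that the parallels through $B$ and through $C$ are forced to coincide with the single line $BC$ rather than merely to be two parallels equidistant on opposite sides.
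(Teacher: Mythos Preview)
Your argument is correct and is precisely the classical area proof of Euclid, Book~$6$, Prop.~$2$, which is exactly what the paper invokes: the paper does not supply its own argument but simply refers the reader to that proposition. So you have faithfully expanded the very reference the paper gives, including the care about Book~$1$, Prop.~$39$ for the converse.
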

\begin{proof}
See \cite{Euclid}, Prop. $2$ of Book $6$.
\end{proof} 
The theorem we deal with in our didactic experience is Varignon's Theorem that, stated as appears in \cite{Varignon}, reads:
\begin{theor}[Varignon's Theorem]\label{Var}
The midpoints of the sides of an arbitrary quadrilateral form a parallelogram.
\end{theor}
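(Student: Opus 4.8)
The plan is to label the quadrilateral $ABCD$ and let $P$, $Q$, $R$, $S$ be the midpoints of the sides $AB$, $BC$, $CD$, $DA$ respectively. The goal is to show that $PQRS$ is a parallelogram, that is, that its two pairs of opposite sides $PQ$, $RS$ and $QR$, $SP$ are parallel. The organizing idea is to introduce the two diagonals of $ABCD$, so that each of the four midpoint segments turns into a \emph{midsegment} of one of the triangles cut off by a diagonal, and then to transfer the parallelism across the figure.

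For the synthetic proof I would first draw the diagonal $AC$, splitting $ABCD$ into triangles $ABC$ and $ACD$. In triangle $ABC$ the points $P$ and $Q$ bisect the sides $AB$ and $BC$ emanating from $B$, so these two sides are cut in the same ratio $1:1$; by the converse part of the Basic Proportionality Theorem (Proposition~\ref{CBOT}) the segment $PQ$ is parallel to the remaining side $AC$. The identical argument in triangle $ACD$, whose sides $CD$ and $DA$ are bisected at $R$ and $S$, yields $RS \parallel AC$. Since $PQ$ and $RS$ are both parallel to $AC$, the Proposition on lines parallel to the same line forces $PQ \parallel RS$. Repeating the whole construction with the other diagonal $BD$ (triangles $ABD$ and $BCD$) gives $SP \parallel BD$ and $QR \parallel BD$, hence $SP \parallel QR$. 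With both pairs of opposite sides parallel, $PQRS$ is a parallelogram.

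As a parallel route, and the one the students are reported to find more natural, I would fix a Cartesian frame and write $A$, $B$, $C$, $D$ as coordinate pairs, so that the midpoints are the averages $P=\tfrac{A+B}{2}$, $Q=\tfrac{B+C}{2}$, $R=\tfrac{C+D}{2}$, $S=\tfrac{D+A}{2}$. A one-line computation gives $Q-P=\tfrac{C-A}{2}=R-S$, so the segments $PQ$ and $SR$ are equal and parallel as vectors; this single identity already certifies that $PQRS$ is a parallelogram, with no case analysis at all.

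The main obstacle I anticipate is not the computation but the force of the word \emph{arbitrary}. The synthetic argument tacitly assumes that the diagonal $AC$ genuinely divides $ABCD$ into two non-degenerate triangles; if three vertices happen to be collinear one of these triangles collapses and the Basic Proportionality Theorem no longer applies in its stated form, while for a non-convex or self-intersecting quadrilateral one must verify that the configuration of midpoints and diagonals is still the one the proof presupposes. The delicate point, then, is to argue that the conclusion survives these degenerate and non-convex configurations -- exactly the place where the analytic proof is cleaner, since the vector identity $Q-P=R-S$ holds verbatim for \emph{any} four points whatsoever.
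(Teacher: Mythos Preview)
Your synthetic proof is essentially the paper's: draw the diagonals, recognize each midpoint segment as a midsegment of a triangle via the converse of the Basic Proportionality Theorem (Proposition~\ref{CBOT}), then transfer parallelism using the proposition that lines parallel to the same line are parallel to each other. Apart from labelling ($P,Q,R,S$ versus the paper's $F,G,H,E$) and the order in which the two diagonals are treated, the argument is the same.

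Your analytic route differs in a small but meaningful way. The paper's students compute the \emph{slopes} of the four midpoint segments via Lemma~\ref{slope} and conclude parallelism from equal slopes via Lemma~\ref{parall}. You instead compute the \emph{displacement vectors} and observe $Q-P=\tfrac{C-A}{2}=R-S$ directly. Your version is cleaner: a single identity settles both parallelism and equal length at once, and it sidesteps the vertical-line nuisance (undefined slope when $x_C=x_A$ or $x_D=x_B$) that the slope computation silently ignores. The slope argument, on the other hand, fits the paper's pedagogical frame, since it invokes exactly the two lemmas the paper has set up for secondary-school use.

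Your closing paragraph on the force of \emph{arbitrary} --- collinear triples, non-convex and self-intersecting quadrilaterals --- is a point the paper does not raise at all; it is a genuine addition, and your observation that the vector identity survives all of these configurations while the synthetic argument needs patching is exactly right.
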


In the proof of Varignon's Theorem we will also need:
\begin{defin}\label{midpoint}
Given a segment $AB$, the {\em midpoint} of $AB$ is the point $M$ equidistant from both $A$ and $B$, i.e. the point $M$ such that $AM$ is congruent to $MB$.
\end{defin}
Finally, we recall the definition of parallelogram:
\begin{defin}
A parallelogram is a quadrilateral with two pairs of parallel opposite sides.

\end{defin}
In order to give an analytic proof of Varignon's Theorem, we consider the plane as a 2-dimensional Euclidian space endowed with a metric structure, where we fix a Cartesian frame that allows us to describe points in coordinates. 
In this setting by Definition \ref{midpoint} we have:
\begin{lem}
Given a segment $AB$, with $A=(x_A,y_A)$ and $B=(x_B,y_B)$, the midpoint $M$ of $AB$ is the point of coordinates 
\begin{equation}\label{puntomedio}
M=\left(\frac{x_A+x_B}{2},\frac{y_A+y_B}{2}\right).
\end{equation}
\end{lem}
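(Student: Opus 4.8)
The plan is to translate the synthetic definition of midpoint (Definition \ref{midpoint}) into the fixed Cartesian frame and then solve for $M$ explicitly. In the metric structure we have placed on the plane, congruence of segments means equality of their Euclidean lengths, so the requirement that $AM$ be congruent to $MB$ becomes the numerical equation $AM = MB$, where the length of a segment $PQ$ is computed by the distance formula $\sqrt{(x_P-x_Q)^2+(y_P-y_Q)^2}$.

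First I would observe that the equidistance condition $AM = MB$ by itself does \emph{not} determine $M$: it describes the whole perpendicular bisector of $AB$. To single out the midpoint one must also use that $M$ lies on the segment $AB$, that is, that $A$, $M$ and $B$ are collinear with $M$ between $A$ and $B$. I would encode this betweenness by writing every point of the segment in parametric form
\[
M = \bigl(x_A + t(x_B - x_A),\; y_A + t(y_B - y_A)\bigr), \qquad t \in [0,1].
\]

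Next I would compute the two distances directly from this parametrization. Substituting into the distance formula gives $AM = |t|\sqrt{(x_B-x_A)^2+(y_B-y_A)^2} = t\cdot AB$ and, symmetrically, $MB = (1-t)\cdot AB$, where the absolute values are dropped because $t\ge 0$ and $1-t\ge 0$ on the segment. Imposing $AM = MB$ and dividing by the nonzero length $AB$ (the points $A$ and $B$ being distinct) reduces the equation to $t = 1-t$, whence $t = \tfrac12$. Substituting $t=\tfrac12$ back into the parametrization yields the coordinates $\tfrac{x_A+x_B}{2}$ and $\tfrac{y_A+y_B}{2}$, which is exactly the claimed formula \eqref{puntomedio}.

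The algebra with the distance formula is entirely routine; the only conceptual point to handle with care is the one flagged above, namely that equidistance must be supplemented by the collinearity of $M$ on $AB$, since otherwise $M$ is not uniquely pinned down. Making the distance from $A$ proportional to the parameter $t$ is precisely what lets the betweenness condition do its work and force $t=\tfrac12$; I expect this to be the main (albeit mild) obstacle to a fully rigorous argument.
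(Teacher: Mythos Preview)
Your argument is correct: parametrizing the segment, computing $AM=t\cdot AB$ and $MB=(1-t)\cdot AB$, and solving $t=1-t$ is a clean and rigorous derivation of the midpoint formula, and you are right to flag that equidistance alone only gives the perpendicular bisector, so collinearity on $AB$ is needed.

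There is nothing to compare against, however: the paper does not prove this lemma. It is stated immediately after Definition~\ref{midpoint} as a standard fact that follows from that definition in the coordinate setting, and the text moves on directly to the slope lemmas without supplying an argument. So your proposal is not an alternative to the paper's proof but rather fills in a step the paper leaves to the reader. If anything, your write-up is more careful than what the context demands, since the result is being recalled as background rather than established from scratch.
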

Finally, we conclude this section by recalling two properties of the slope of a straight-line:
\begin{lem}\label{slope}
Let  $A$ and $B$ be two points with $A=(x_A,y_A)$ and $B=(x_B,y_B)$, the slope of the straight-line that contains $A$ and $B$ is given by \begin{equation}\label{coef}
m=\frac{y_B-y_A}{x_B-x_A}.
\end{equation}
\end{lem}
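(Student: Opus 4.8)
The plan is to derive the formula directly from the analytic description of a non-vertical straight-line in the fixed Cartesian frame. First I would recall that such a line is the locus of points $(x,y)$ satisfying an equation of the form $y = mx + q$, where the coefficient $m$ is precisely what we call the slope. Since the points $A=(x_A,y_A)$ and $B=(x_B,y_B)$ both lie on this line, their coordinates satisfy the equation, giving the pair of relations $y_A = m\,x_A + q$ and $y_B = m\,x_B + q$.

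The key step is then to eliminate the unknown intercept $q$. Subtracting the first relation from the second cancels $q$ and leaves $y_B - y_A = m\,(x_B - x_A)$. Assuming the two points are distinct and the line is not parallel to the $y$-axis, we have $x_A \neq x_B$, so dividing both sides by $x_B - x_A$ isolates $m$ and produces exactly the expression \eqref{coef}. This is essentially the whole argument: the computation is routine once the defining equation of the line is in hand.

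Alternatively, and consistently with the dual synthetic/analytic spirit of the paper, I would sketch a purely geometric justification. The horizontal and vertical displacements from $A$ to $B$, of signed lengths $x_B - x_A$ and $y_B - y_A$, are the legs of a right triangle whose hypotenuse lies along the line. Interpreting the slope as the tangent of the angle the line makes with the horizontal direction, it equals the ratio of the vertical leg to the horizontal leg, which again yields \eqref{coef}; here the Basic Proportionality Theorem (Proposition \ref{CBOT}) guarantees that this ratio does not depend on the particular pair of points chosen on the line.

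The only genuine obstacle is the degenerate case $x_A = x_B$: then the line is vertical, the representation $y = mx + q$ fails to describe it, and the slope is undefined (the denominator in \eqref{coef} vanishes). I would therefore carry out the argument under the standing assumption that the line is not vertical, so that $x_B - x_A \neq 0$ and the division performed in the key step is legitimate.
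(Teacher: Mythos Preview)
The paper does not actually prove this lemma: it is stated without proof, simply recalled as a known property of the slope (alongside Lemma~\ref{parall}) to be used later in the analytic proof of Varignon's Theorem. There is therefore nothing in the paper to compare your argument against.

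That said, your proposal is a correct and entirely standard derivation. Starting from the explicit form $y = mx + q$, substituting the two points, and subtracting to eliminate $q$ is exactly how one obtains \eqref{coef}; your treatment of the degenerate vertical case $x_A = x_B$ is also appropriate. The alternative geometric reading via the right triangle and the tangent of the inclination angle is a nice complement in the spirit of the paper, though invoking Proposition~\ref{CBOT} for the independence of the ratio from the chosen pair of points is perhaps more than is needed---similarity of the relevant right triangles suffices.
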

\begin{lem}\label{parall}
Parallel lines have the same slope.
\end{lem}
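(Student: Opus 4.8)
The plan is to translate the synthetic notion of parallelism into the coordinate language supplied by Lemma \ref{slope} and then reduce the claim to a single linear equation in one variable. First I would record the equation of a (non-vertical) line: given a line through $A=(x_A,y_A)$ with slope $m$, a point $P=(x,y)$ lies on it precisely when the slope of the segment joining $A$ to $P$ equals $m$, that is, by \eqref{coef}, when $\frac{y-y_A}{x-x_A}=m$. Rearranging yields $y=mx+q$ with $q=y_A-mx_A$, so every non-vertical line is the graph of $y=mx+q$ for suitable constants $m,q$, where $m$ is exactly its slope in the sense of \eqref{coef}.

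Next I would take two parallel lines and write them as $r_1\colon y=m_1x+q_1$ and $r_2\colon y=m_2x+q_2$, arguing by contraposition. Assume $m_1\neq m_2$. A common point $(x,y)$ of $r_1$ and $r_2$ must satisfy $m_1x+q_1=m_2x+q_2$, hence $(m_1-m_2)x=q_2-q_1$. Since $m_1-m_2\neq 0$, this has the unique solution $x=\frac{q_2-q_1}{m_1-m_2}$, and the corresponding value $y=m_1x+q_1$ produces a point lying on both lines. Thus $r_1$ and $r_2$ meet, contradicting the definition of parallel lines, which forbids meeting in either direction. Therefore parallelism forces $m_1=m_2$, which is the assertion.

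The step I expect to require the most care is not the algebra, which is the routine solution of one linear equation, but the case analysis around vertical lines. The slope formula \eqref{coef} presupposes $x_A\neq x_B$, so a vertical line has no real slope and the representation $y=mx+q$ is unavailable. I would handle this by restricting attention to non-vertical lines, which suffices for the use made of the lemma in the proof of Varignon's Theorem since one is free to orient the Cartesian frame accordingly, and by noting separately that two distinct vertical lines are parallel and share the convention of undefined slope, so the statement holds in that regime as well. A secondary point worth flagging is the distinction between genuinely parallel and coincident lines: when $m_1=m_2$ the lines are parallel exactly when $q_1\neq q_2$, whereas $q_1=q_2$ gives a single line. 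This does not affect the implication proved, but it clarifies that equality of slopes is necessary rather than sufficient for parallelism.
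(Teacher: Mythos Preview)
The paper states this lemma without proof, so there is no argument of its own to compare your proposal against. Your argument is correct and is the standard one: represent each non-vertical line in the form $y=mx+q$ using \eqref{coef}, and observe by contraposition that unequal slopes yield a unique intersection point, contradicting the definition of parallel lines. Your handling of the vertical-line exception and the remark distinguishing parallel from coincident lines are both appropriate; in particular, noting that the Cartesian frame in the analytic proof of Theorem~\ref{Var} may be oriented to avoid vertical sides is exactly the caveat that application requires.
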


\section{Didactic experience}\label{DE}
The idea of a different approach in teaching geometry in high school, using in parallel analytic and synthetic tools comes from a didactic experience achieved during the academic Course  ``tecnologie e metodologie didattiche per l'insegnamento della matematica". More precisely, we proposed the following activity: \\

\texttt{Given a quadrilateral $ABCD$, denote by $F$, $G$, $H$ and $E$ the midpoints of the sides $AB$, $BC$, $CD$, $DA$, respectively.\\
What kind of properties the quadrilateral $EFGH$ has?  Write a theorem that explains the figure you have constructed.}\\

\newrgbcolor{ududff}{0.30196078431372547 0.30196078431372547 1}
\newrgbcolor{zzttqq}{0.6 0.2 0}
\psset{xunit=0.2cm,yunit=0.2cm,algebraic=true,dimen=middle,dotstyle=o,dotsize=3pt 0,linewidth=1pt,arrowsize=1pt 1,arrowinset=0.25}
\begin{pspicture}
(-44.747596158964015,-14.12972444951273)(25.146088588805537,6.075204913347687)
\psaxes[labelFontSize=\scriptstyle,xAxis=false,yAxis=false,Dx=5,Dy=5,ticksize=-2pt 0,subticks=2]{->}(0,0)(-34.747596158964015,-29.12972444951273)(25.146088588805537,16.075204913347687)
\pspolygon[linewidth=1pt,linecolor=zzttqq,fillcolor=zzttqq,fillstyle=solid,opacity=0.1](-0.6823954432763722,-11.303402605876652)(-13.72,-4.52)(-17.440058505530697,2.3627605466169896)(4.22,0.42)
\pspolygon[linewidth=1pt,linecolor=zzttqq,fillcolor=zzttqq,fillstyle=solid,opacity=0.1](-7.201197721638186,-7.911701302938326)(-15.58002925276535,-1.078619726691505)(-6.610029252765349,1.3913802733084948)(1.7688022783618138,-5.441701302938326)
\begin{scriptsize}
\psdots[dotstyle=*,linecolor=ududff](-0.6823954432763722,-11.303402605876652)
\rput[bl](0.4738335941757357,-10.740313638172786){\ududff{$A$}}
\psdots[dotstyle=*,linecolor=ududff](-13.72,-4.52)
\rput[bl](-13.511527327292873,-6.9652675497843863){\ududff{$B$}}
\psdots[dotstyle=*,linecolor=ududff](-17.440058505530697,2.3627605466169896)
\rput[bl](-17.2121827537235,2.923644859417264){\ududff{$C$}}
\psdots[dotstyle=*,linecolor=ududff](4.22,0.42)
\rput[bl](4.422418200794018,0.9879174055920068){\ududff{$D$}}
\psdots[dotsize=4pt 0,dotstyle=*,linecolor=darkgray](-7.201197721638186,-7.911701302938326)
\rput[bl](-6.964213880530991,-7.4381903345885245){\darkgray{$F$}}
\psdots[dotsize=4pt 0,dotstyle=*,linecolor=darkgray](-15.58002925276535,-1.078619726691505)
\rput[bl](-15.333388460304874,-0.606211085793499){\darkgray{$G$}}
\psdots[dotsize=4pt 0,dotstyle=*,linecolor=darkgray](-6.610029252765349,1.3913802733084948)
\rput[bl](-6.394882276464741,1.841914811691385){\darkgray{$H$}}
\psdots[dotsize=4pt 0,dotstyle=*,linecolor=darkgray](1.7688022783618138,-5.441701302938326)
\rput[bl](2.9742923033091415,-4.99006443710364){\darkgray{$E$}}
\end{scriptsize}
\end{pspicture}

We proposed, this activity,  given in Baccaglini-Frank (see \cite{Baccaglini}, p.174 ), to stimulate  students to make use of a DGS (for example Geogebra \cite{narh}) to give a solution of the ``open problem" proposed (see \cite{Boero}, \cite{Pedemonte} and \cite{Mariotti} for more examples).\\

The software shows the parallelism between $FG$ and $EH$ and between $FE$ and $GH$, so that one is {\bf leaded} to formulate Varignon's Theorem \ref{Var}. 
The Theorem and its proof were published postumo in $1731$ in the volume ``Elemens de Mathematique de Monsieur Varignon" (see \cite{Varignon}).\\

We quote the proof of Varignon's Theorem translated in \cite{oliver} from its French edition.
In the original work of Varignon \cite{Varignon}, the notation $AF\cdot  FB :: AE\cdot ED$ represents proportionality between segments, that is $AF$ is to $FB$ as $AE$ is to $ED$. Moreover, in the text ``Part $2$" correspond to second part of ``Proposition \ref{CBOT}".
\begin{proof}[Synthetic proof]
\begin{quotation}\small
If sides $AB$, $BC$, $CD$ and $DA$ of a quadrilateral are each divided into two equal parts by points $F$, $G$, $H$ and $E$, such that the midpoints are joined by segments $FE$, $EH$, $HG$, and $GF$, the quadrilateral figure $FEGH$ is a parallelogram; this may be shown by the extension of diagonals $DB$ and $AC$, since by hypothesis, $AF = FB$ and $AE = ED$, $AF\cdot  FB :: AE\cdot ED$, and thus (by Part~$2$) $EF$ is parallel to $DB$. In like manner, by hypothesis, $BG = GC$ and $DH = HC$; $BG \cdot GC :: DH \cdot HC$, and consequently (by Part~$2$) $GH$ will also be parallel to line $BD$. Since $EF$ and $GH$ are parallel to the same third line, they are therefore parallel between themselves.
By the same reasoning, one could prove that the lines $FG$ and $EH$ are parallel to the diagonal $AC$, and as a consequence, are parallel between themselves. Thus, the quadrilateral $EFGH$ is a parallelogram.
\end{quotation}
\end{proof}
In \cite{oliver} the author explains that in Varignon’s arrangement of the Euclidean geometric propositions, Part~$2$ of Theorem $29$ stated as ``the line that cuts a triangle proportionally is parallel to its base"(\cite{Varignon} p. $60$) corresponds to the second part of Proposition \ref{CBOT}.\\

This proof is the expected one for an Italian student of the first year in secondary schools. Although, the students involved in our experience rather constructed an analytic proof using a Cartesian frame and the formulas for the slope of a straight-line given in Lemmas \ref{slope} and \ref{parall}. Their proof is the following:

\begin{proof}[Analytic Proof]
Consider a Cartesian frame with origin $(0,0)$ and axes $x$ and $y$, then the points $A$, $B$, $C$ and $D$ in Cartesian coordinates are given by
\[
A=(x_A,y_A),\;\;\:\: B=(x_B,y_B),\;\; \:\:C=(x_C,y_C), \; \; \:\:D=(x_D,y_D)
\]
and, consequently, the midpoints are the following  
\[
\begin{aligned}
&F=\left(\frac{x_A+x_B}{2},\frac{y_A+y_B}{2}\right),\:\:\:\:\:\: G=\left(\frac{x_B+x_C}{2},\frac{y_B+y_C}{2}\right),\\
&H=\left(\frac{x_C+x_D}{2},\frac{y_C+y_D}{2}\right),\:\:\:\:\:\: E=\left(\frac{x_D+x_A}{2},\frac{y_D+y_A}{2}\right).\\
\end{aligned}
\]
Now, using formula \eqref{coef} for the slope of a straight-line containing two points, we have
\[\begin{aligned}
&m_{FG}=\frac{y_C-y_A}{x_C-x_A}, \:\:\:&m_{HE}=\frac{y_A-y_C}{x_A-x_C},\:\:\:\:
&m_{GH}=\frac{y_D-y_B}{x_D-x_B},\:\:\:\: &m_{FE}=\frac{y_B-y_D}{x_B-x_D},
\end{aligned}\]
and from Lemma \ref{slope} the sides $FG$ and $HE$ are parallel and also, by the same reason, $GH$ and $FE$ which implies that $EFGH$ is a parallelogram.
\end{proof}

\end{document}